\theoremstyle{plain}
\newtheorem{theorem}{Theorem}
\newtheorem{proposition}{Proposition}
\newtheorem{corollary}{Corollary}
\theoremstyle{definition} 
\newtheorem{definition}{Definition}
\newcommand{\set}[1]{\lbrace #1 \rbrace} 
\newcommand{\seq}[1]{\left( #1 \right)} 
\newcommand{\sse}{\subseteq}
\newcommand{\reals}{\mathbb{R}}
\newcommand{\ints}{\mathbb{Z}}
\newcommand{\nats}{\mathbb{N}}
\newcommand{\pr}[1]{\text{Prob} \left( #1 \right)} 
\newcommand{\ith}[1]{#1^{\text{th}}} 
\newcommand{\mSet}{S}
\newcommand{\mGrp}{\mathcal{G}}
\newcommand{\mEq}{\sim}
\newcommand{\mMod}{\mSet \, /  \, \mGrp}
\newcommand{\opo}{\preceq}
\newcommand{\ipo}{\opo_{\mGrp}}
\newcommand{\wpo}{\opo_{\mGrp, w}}
\newcommand{\spo}{\opo_{\mGrp, s}}
\newcommand{\sym}{\mathcal{S}}
\newcommand{\ssc}{\text{SC}}
\newcommand{\subMajor}{\prec_w}
\newcommand{\dtv}{d_{\textsc{tv}}}
\newcommand{\meet}{\wedge}
\title{Two Musical Orderings}
\author{Marcus Pendergrass\footnote{Department of Mathematics and Computer Science, Hampden-Sydney College, Hampden-Sydney, Virginia, USA, email: mpendergrass@hsc.edu}}
\date{\today}
\begin{document}
\maketitle

\epigraph{{The science of pure mathematics, in its modern developments, may claim to be the most original creation of the human spirit. Another claimant for this position is music.}}{\textit{Mathematics as an Element in the History of Thought}\\\textsc{Alfred North Whitehead}}
\begin{abstract}
We make some general observations about partial orders on quotient spaces, and explore their use in music theory, in two different contexts.
In the first, we show that many of the most familiar chord and scale types in Western music appear as extremal elements in the partial order induced by set inclusion on pitch class sets of $T_n$-type.  
In the second, we propose a partial order that models the brightness aspect of timbre.  We use this order to compare the brightness of six wind instruments, and find that the results conform to intuition.  We also use the order to pose sound design problems of a certain type, which can be solved efficiently using linear programming.

\vspace*{0.25cm}
\noindent \textbf{Keywords}: partial order, quotient space, extremal element, pitch set class, timbre, brightness, total variational distance, $\ell^1$ optimization, linear programming.

\vspace*{0.25cm}
\noindent \textbf{MSC Classification}: 06A99; 00A65 

\bigskip
\end{abstract}

\section{Introduction} \label{s:intro}
Orderings of various kinds are explicit or implicit in many of the concepts in music theory.  
Perhaps the most prominent examples of this are the recent mathematical theories of voice leading.  
In \cite{Callender2008} Callender and Tymoczko lay down general principles that orderings should satisfy to qualify as reasonable measures of voice leading size (also see \cite{Tymoczko2006, Tymoczko2008}), while in \cite{Hall} Hall and Tymoczko focus on the familiar partial order of submajorization as way of comparing voice leadings.
Indeed, orderings of one sort or another arise naturally whenever there is a notion of size or precedence among objects in a musical space.  
Paying careful attention to the order properties of the space can lead to new musical insights, and can bring powerful mathematical techniques to bear on musical problems.

In this paper we study two partial orders that illustrate this point.  
The first, presented in section \ref{s:pcOrder} below, is the partial order induced by set inclusion on the set classes of $T_n$-type from post-tonal theory.  
While this ordering is known to music theorists \cite{Straus}, we believe its {explanatory} power has not been fully appreciated.  
In particular, we will show that many of the most important scale and chord types in Western music appear as minimal elements in certain natural suborders of this partial order.  

The second partial order is presented in section \ref{s:timbreOrder}, where we define a class of orderings on musical timbres.  Timbre is a notoriously difficult notion to quantify, involving transient effects (e.g. attack, release), steady-state effects, as well as more complex psychoacoustic and even cultural effects \cite{Alluri, Fales, Marozeau2003, Slawson}.  Here, our only aim is to model some common and relatively simple musical judgements about timbre.  One judgement that musicians often make is that a certain instrument is ``brighter'' than another; for example, a trumpet is often thought to be brighter in tone than a French horn.  We will unpack the meaning of such a judgement using a partial order on an appropriately defined musical space.  We will show that our approach generalizes to other aspects of timbre.  Finally, we will show how our approach can be used solve certain sound design problems, such as ``of all instruments no brighter than a trumpet, which has a timbre that is closest to that of an oboe?''

We preface the discussion to follow with some general observations about partial orders on quotient spaces, as they are relevant both to the ordering on set classes, to submajorization, and to other applications as well.

\section{Partial Orders on Quotient Spaces} \label{s:genFrame}
Our basic setting is a set $\mSet$ of musical objects, along with a {partial order} $\opo$ on $\mSet$.  The partial order models some notion of size or precedence among the objects in $\mSet$.  
In addition, we posit a group $\mGrp$ of transformations mapping $\mSet$ into itself.  
An equivalence relation $\mEq$ on $\mSet$ is defined by $a \mEq b$ if and only if there exists a transform $T \in \mGrp$ with $Ta = b$.  
We denote the equivalence class of $a \in \mSet$ by $A = [a]$, and the set of distinct equivalence classes in $\mSet$ is denoted by $\mMod$. 
When does the order on $\mSet$ give rise to an order on $\mMod$?  
\begin{definition}
The \emph{strong induced relation} $\spo$ on $\mMod$ is defined by
\begin{equation} \label{e:spo}
\text{$A \spo B$ if and only if for all $a \in A$ there exists $b \in B$ such that $a \opo b$.}
\end{equation}
The \emph{weak induced relation} $\wpo$ on $\mMod$ is defined by
\begin{equation} \label{e:wpo}
\text{$A \wpo B$ if and only if there exists $a \in A$ and $b \in B$ such that $a \opo b$.}
\end{equation}
\end{definition}

It is clear that if $A$ precedes $B$ in the strong relation, then it does so in the weak relation as well.  Also, since the identity map is in $\mGrp$, both the weak and strong relations are reflexive.  But, in general, neither of these relations is an actual ordering of $\mMod$.  Under certain conditions, however, they are.

\begin{definition}
The semigroup $\mGrp$ is said to be \emph{increasing} on the partial order $\seq{\mSet, \opo}$ if for all $T$ in $\mGrp$, and all $a, b \in \mSet$, whenever $a \opo b$ in $\mSet$, then $Ta \opo Tb$ as well.  The semigroup $\mGrp$ is said to \emph{act transversely} on $\mSet$ if for all $T \in \mGrp$, and all $a \in \mSet$, whenever $Ta \opo a$, then in fact $Ta = a$.  
\end{definition}

Note that if $\mGrp$ is a group that acts transversely on $\mSet$, then $a$ and $Ta$ are always either incomparable, or identical.  It follows in this case that all equivalence classes in $\mMod$ are antichains in the partial order on $\mSet$.  
\begin{theorem} \label{t:inducedOrder}
Let $\mGrp$ be a group acting on the partial order $\seq{\mSet, \opo}$.  Then:
\begin{enumerate}
\item The strong relation is a preorder on $\mMod$.  
\item If $\mGrp$ is increasing on $\mSet$, then the strong and weak relations are identical.
\item If $\mGrp$ acts transversely on $\mSet$, then the strong relation is a partial order on $\mMod$.  
\end{enumerate}
\end{theorem}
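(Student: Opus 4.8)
The plan is to verify, directly from the definitions of $\spo$ and $\wpo$, the defining properties of each order-type, leaning on the facts that $\opo$ is itself a partial order on $\mSet$ and that $\mGrp$ is a group. For part (1), reflexivity is already noted in the text (the identity lies in $\mGrp$, and $\opo$ is reflexive). For transitivity, I would take $A \spo B$ and $B \spo C$ and chain the witnesses: given an arbitrary $a \in A$, the first relation supplies a $b \in B$ with $a \opo b$, the second supplies a $c \in C$ with $b \opo c$, and transitivity of $\opo$ yields $a \opo c$. Since $a$ was arbitrary, $A \spo C$, and $\spo$ is a preorder.

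For part (2), the implication $A \spo B \Rightarrow A \wpo B$ is already observed, so only the reverse direction uses the increasing hypothesis. Given a single witness pair $a_0 \in A$, $b_0 \in B$ with $a_0 \opo b_0$, I would promote it to a universal statement: for arbitrary $a \in A$, pick $T \in \mGrp$ with $T a_0 = a$ (possible since $a$ and $a_0$ lie in the same class $A$), apply the increasing property to $a_0 \opo b_0$ to obtain $a = T a_0 \opo T b_0$, and observe that $T b_0 \in B$. Hence every $a \in A$ is dominated by some element of $B$, i.e.\ $A \spo B$, so the two relations coincide.

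For part (3), part (1) already gives a preorder, so only antisymmetry remains. Suppose $A \spo B$ and $B \spo A$. Fixing $a \in A$, I would produce $b \in B$ with $a \opo b$, then $a' \in A$ with $b \opo a'$, so that $a \opo a'$ by transitivity. The crux is to conclude $a = a'$: since $a \mEq a'$ there is a $T \in \mGrp$ with $a' = T a$, and the remark preceding the theorem (comparable $\mGrp$-translates are identical) forces $a = a'$ because $a$ and $a'$ are comparable. Then $a \opo b \opo a$, and antisymmetry of $\opo$ on $\mSet$ gives $a = b$; hence $A = [a] = [b] = B$.

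The main obstacle is this last step. The transverse hypothesis is stated one-sidedly, as $T a \opo a \Rightarrow T a = a$, whereas the chain naturally produces the opposite direction $a \opo a' = T a$. Resolving this is exactly where the group structure (rather than a mere semigroup) is needed: applying transversality to the inverse $T^{-1}$ and the point $a'$ converts the relation into the admissible form, which is precisely the content already packaged in the remark. I would flag that the group assumption is what makes antisymmetry go through.
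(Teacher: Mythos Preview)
Your proof is correct and matches the paper's argument essentially line for line; the only cosmetic difference is in part (3), where the paper chooses $T$ with $Ta' = a$ (so that $Ta' \opo a'$ is immediate), whereas you choose $T$ with $Ta = a'$ and then pass to $T^{-1}$---both uses of the group structure amount to the same thing. Your closing observation that the group (rather than semigroup) hypothesis is what drives antisymmetry is apt and implicit in the paper's choice as well.
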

\begin{proof}
\ 
\begin{enumerate}
\item It is obvious that the strong relation is transitive.  We have already observed that it is reflexive, hence it is a preorder.

\item We have already observed that $A \spo B$ implies $A \wpo B$.  For the reverse implication, let $a$ be an arbitrary element of $A$, and assume $A \wpo B$.   Then there exist $a_0 \in A$ and $b_0 \in B$ with $a_0 \opo b_0$.  Since $a$ and $a_0$ are in the same equivalence class, there exists $T \in \mGrp$ with $Ta_0 = a$.  Since $T$ is increasing on $\mSet$, we have $a \opo Tb_0$.  Since $Tb_0 \in B$, we conclude that $A \spo B$.

\item To show the strong relation is a partial order, we need only verify that the antisymmetric property holds.  Suppose that $A \spo B$ and $B \spo A$.  Then there exists $a, a^{\prime} \in A$ and $b \in B$ with $a \opo b \opo a^{\prime}$. Since $a$ and $a^{\prime}$ are in the same equivalence class, there exists $T \in \mGrp$ with $Ta^{\prime} = a$.  Thus $Ta^{\prime} \opo a^{\prime}$, and since $\mGrp$ acts transversely, we have $a = a^{\prime}$.  Hence $b = a$, and thus $A = B$. 

\end{enumerate}

\end{proof}
Note that if $\mGrp$ is both increasing and acts transversely on $\mSet$, then by Theorem \ref{t:inducedOrder} the weak and strong relations are identical, and form a partial order on $\mMod$.  
In such cases we will simply write ``$\ipo$'' for the induced partial order.  

The following special case underlies the discussion of the subset/superset order on set classes in section \ref{s:pcOrder}.
First, suppose the group $\mGrp$ acts on a set $\mSet_0$, and that $\mSet$ is some collection of subsets of $\mSet_0$.   Extend the action of $\mGrp$ to $\mSet$ naturally, i.e.
\begin{equation} \label{e:extension}
Ta = \set{Tx: x \in a}, \quad a \in \mSet, \ T \in \mGrp
\end{equation}
Let the partial order on $\mSet$ be given by set inclusion.  In this setting, when is the induced relation on $\mMod$ a partial order?  If $\mSet$ consists of \emph{finite} subsets of $\mSet_0$, the answer is ``always.''
\begin{corollary} \label{c:finiteSetCor}
Let $\mSet$ be the collection of all {finite} subsets of some nonempty set $\mSet_0$, and let the partial order on $\mSet$ be given by set inclusion.  Let $\mGrp$ be any group acting on $\mSet_0$, and extend the action of $\mGrp$ to $\mSet$ naturally via \eqref{e:extension}.  Then the induced relations \eqref{e:spo} and \eqref{e:wpo} are identical, and form a partial order on $\mMod$.
\end{corollary}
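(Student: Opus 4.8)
The plan is to obtain the corollary as a direct application of Theorem \ref{t:inducedOrder}. Specifically, I would verify that in this setting the group $\mGrp$ is both \emph{increasing} on $\seq{\mSet, \opo}$ and \emph{acts transversely} on $\mSet$. Once both properties are in hand, the remark immediately following Theorem \ref{t:inducedOrder} gives exactly the two conclusions claimed: the weak and strong relations coincide (by part 2, using increasing), and they form a partial order on $\mMod$ (by part 3, using transversality).

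First I would check that $\mGrp$ is increasing. Suppose $a \opo b$, which here means $a \sse b$, and let $T \in \mGrp$ be arbitrary. If $x \in Ta$, then by the natural extension \eqref{e:extension} we have $x = Ty$ for some $y \in a$; since $a \sse b$ we get $y \in b$, hence $x = Ty \in Tb$. Thus $Ta \sse Tb$, i.e. $Ta \opo Tb$. This step is routine set-chasing and, worth noting, relies on neither the finiteness of the subsets nor the invertibility of $T$.

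The real content lies in verifying transversality, and this is where the finiteness hypothesis is essential. Suppose $Ta \opo a$, i.e. $Ta \sse a$, for some $T \in \mGrp$ and $a \in \mSet$. Because $T$ is an element of the \emph{group} $\mGrp$, it acts as a bijection of $\mSet_0$, so its natural extension preserves cardinality: $\abs{Ta} = \abs{a}$. Since $a$ is finite and $Ta$ is a subset of $a$ of equal cardinality, we must have $Ta = a$, as transversality requires. The finiteness assumption is doing exactly the work needed here, and it cannot be dropped: for infinite subsets a bijection can properly shrink a set, as with the successor map $T(n) = n+1$ on $\ints$ applied to $a = \nats$, where $Ta = \set{1,2,3,\ldots} \subsetneq a$ yet $Ta \neq a$. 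I therefore expect transversality to be the only nontrivial step, with the cardinality argument for finite sets being its crux.
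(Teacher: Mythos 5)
Your proposal is correct and takes essentially the same approach as the paper: verify that $\mGrp$ is increasing by direct set-chasing, establish transversality from finiteness together with the injectivity of $T$ (your cardinality argument is just the contrapositive of the paper's pigeonhole argument), and then invoke Theorem \ref{t:inducedOrder}. Your counterexample with the successor map on $\nats$ showing finiteness cannot be dropped is a nice addition, though the paper itself addresses the infinite case separately via the fixed point property in Corollary \ref{c:fixedPointCor}.
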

\begin{proof}
$a \subseteq b$ implies that $Ta \subseteq Tb$ for all $T \in \mGrp$, so $\mGrp$ is increasing on $S$.  If $Ta \subseteq a$ but $Ta \ne a$ and $a$ is finite, then by the pigeonhole principle there would have to exist distinct $s_1$ and $s_2$, both in $a$, with $Ts_1 = Ts_2$.  But this would contradict the invertibility of the transform $T \in \mGrp$.  Hence, then inclusion cannot be strict, and thus $\mGrp$ acts transversely on $\mSet$.  Now apply Theorem \ref{t:inducedOrder}.
\end{proof}

In the general case when $\mSet$ may contain infinite subsets of $\mSet_0$, more restrictive conditions are required.  The next corollary delineates one such case.  Recall that the group $\mGrp$ acting on $\mSet_0$ is said to be \emph{simply transitive} if, for every pair $\seq{x,y} \in \mSet_0 \times \mSet_0$, there exists a unique $T \in \mGrp$ such that $Tx = y$.  If $\mGrp$ is simply transitive on $\mSet_0$, then the pair $\seq{\mSet_0, \mGrp}$ is a \emph{generalized interval system} as defined by Lewin \cite{Lewin}.  Say that a subset $a$ of $\mSet_0$ has the \emph{fixed point property} with respect to $\mGrp$ if, for all $T \in \mGrp$, $Ta \sse a$ implies that there exists $x \in a$ such that $Tx = x$.
\begin{corollary} \label{c:fixedPointCor}
Let $\mSet_0$ be a nonempty set, and let $\mGrp$ be a group whose action on $\mSet_0$ is simply transitive.  Let $\mSet$ be a collection of subsets of $\mSet_0$, ordered by set inclusion, and extend the action of $\mGrp$ to $\mSet$ naturally.  If each of the sets in $\mSet$ has the fixed point property with respect to $\mGrp$, then the induced relations \eqref{e:spo} and \eqref{e:wpo} are identical, and form a partial order on $\mMod$.
\end{corollary}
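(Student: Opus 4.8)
The plan is to apply Theorem~\ref{t:inducedOrder} by verifying its two hypotheses: that $\mGrp$ is increasing on $\seq{\mSet, \opo}$, and that $\mGrp$ acts transversely on $\mSet$. The first hypothesis is immediate and identical to the argument in Corollary~\ref{c:finiteSetCor}: for the inclusion order, $a \sse b$ plainly implies $Ta \sse Tb$, since applying $T$ elementwise preserves containment. So the entire burden of the proof rests on establishing transversality, namely that $Ta \sse a$ forces $Ta = a$, using the fixed point property together with simple transitivity.

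First I would fix $T \in \mGrp$ and $a \in \mSet$ with $Ta \sse a$, and let $x \in a$ be the fixed point guaranteed by the fixed point property, so that $Tx = x$. The key observation is that in a simply transitive action, the \emph{only} element of $\mGrp$ fixing any point is the identity: if $Tx = x = \id \, x$, then by the uniqueness clause of simple transitivity (the unique transform sending $x$ to $x$), we must have $T = \id$. Once $T = \id$, we get $Ta = a$ trivially, so the inclusion $Ta \sse a$ cannot have been strict. This shows $\mGrp$ acts transversely on $\mSet$.

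With both hypotheses in hand, Theorem~\ref{t:inducedOrder}, part~(2) gives that the weak and strong relations coincide (from the increasing property), and part~(3) gives that the strong relation is antisymmetric, hence a partial order (from transversality); combining these yields exactly the claimed conclusion.

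The step I expect to be the crux is the deduction $Tx = x \implies T = \id$. It is the one place where simple transitivity is doing real work beyond ordinary transitivity: mere transitivity would guarantee \emph{some} transform moving $x$ to $x$ but not that it is the identity, and without uniqueness a nontrivial stabilizer could exist, allowing a strict inclusion $Ta \subsetneq a$ even when $a$ has a fixed point. So the argument must explicitly invoke uniqueness, contrasting $T$ with $\id$ as two transforms both sending $x$ to itself. The remaining parts are routine and mirror the preceding corollary.
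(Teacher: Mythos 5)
Your proposal is correct and follows essentially the same route as the paper: verify the increasing property exactly as in Corollary~\ref{c:finiteSetCor}, then establish transversality by taking the fixed point $x \in a$ with $Tx = x$ and invoking the uniqueness clause of simple transitivity (comparing $T$ with the identity) to conclude $T = \id$, hence $Ta = a$, and finish by Theorem~\ref{t:inducedOrder}. Your identification of the crux --- that simple transitivity forces trivial point stabilizers --- is precisely the step the paper's proof relies on.
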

\begin{proof}
The proof that $\mGrp$ is increasing on $\mSet$ is as before.  To show that $\mGrp$ acts transversely on $\mSet$, let $T \in \mGrp$, and suppose that $Ta \sse a$ for some $a \in \mSet$.  Since $a$ has the fixed point property, there exists $x \in a$ with $Tx = x$.  Simple transitivity now implies that $T$ must be the identity mapping.  Hence $Ta = a$.
\end{proof}

As an example application of Corollary \ref{c:fixedPointCor}, let $\mSet_0$ be any Euclidean space, let $\mSet$ be any collection of compact, convex subsets of $\mSet_0$, and assume that the functions in $\mGrp$ are all continuous on $\mSet_0$.  Then by Brouwer's fixed point theorem each set in $\mSet$ has the fixed point property with respect to $\mGrp$.  Hence, if $\mGrp$ is simply transitive on $\mSet_0$, the induced relation on $\mMod$ is a partial order.

Although we do not pursue it in depth here, there is an application of Theorem \ref{t:inducedOrder} to product orders that is closely tied to submajorization.  Let $\mSet = \mSet_{0}^{n}$, where $\mSet_0$ is endowed with a partial order $\opo$.  The order on $\mSet_0$ extends to a partial order on $\mSet$ component-wise.  Let $\sym_n$ denote the symmetric group of all permutations on $n$ elements.  Any subgroup $\mGrp$ of $\sym_n$ acts naturally on $\mSet$ via
\begin{equation*}
\sigma a = \seq{a_{\sigma(i)}, 0 \le i \le n-1}, \quad a \in \mSet, \  \sigma \in \mGrp.
\end{equation*}
\begin{corollary} \label{c:componentWiseCor}
Let $\opo$ be a partial order on a nonempty set $\mSet_0$, let $\mSet = \mSet_{0}^{n}$ have the corresponding component-wise partial order, and let $\mGrp$ be a subgroup of the symmetric group $\sym_n$, acting naturally on $\mSet$.  Then the induced relations \eqref{e:spo} and \eqref{e:wpo} are identical, and form a partial order on $\mMod$.
\end{corollary}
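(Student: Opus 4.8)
The plan is to reduce everything to Theorem~\ref{t:inducedOrder} by verifying its two structural hypotheses: that $\mGrp$ is \emph{increasing} on $\mSet$, and that $\mGrp$ \emph{acts transversely} on $\mSet$. Once both are in hand, parts~2 and~3 of that theorem give at once that $\spo$ and $\wpo$ coincide and form a partial order on $\mMod$, which is exactly the claim. The increasing property is the easy half. If $a \opo b$ in the component-wise order, then for each index $i$ the relation $a_{\sigma(i)} \opo b_{\sigma(i)}$ is simply an instance of the hypothesis $a_j \opo b_j$ taken at $j = \sigma(i)$; since $(\sigma a)_i = a_{\sigma(i)}$ and $(\sigma b)_i = b_{\sigma(i)}$, this says $\sigma a \opo \sigma b$. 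This mirrors the one-line verifications in the preceding corollaries.

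The substance lies in transversality. Suppose $\sigma a \opo a$ for some $\sigma \in \mGrp$ and $a \in \mSet$. Unwinding the component-wise order and the definition of the action, this says precisely that $a_{\sigma(i)} \opo a_i$ for every index $i$. I would then exploit the cyclic structure of the permutation $\sigma$. Fix an index $i$ and follow its orbit $i, \sigma(i), \sigma^2(i), \dots$; because $\sigma$ has finite order, this orbit is a finite cycle that returns to $i$ after some $k$ steps, $\sigma^k(i) = i$. Applying the inequality $a_{\sigma(j)} \opo a_j$ successively along the orbit produces a chain of $\opo$-relations that closes back on itself,
\begin{equation*}
a_i \opo a_{\sigma^{k-1}(i)} \opo \cdots \opo a_{\sigma(i)} \opo a_i .
\end{equation*}
Transitivity and antisymmetry of the partial order $\opo$ on $\mSet_0$ then force every element of this chain to be equal, so in particular $a_{\sigma(i)} = a_i$. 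Since every index belongs to some orbit of $\sigma$, we conclude $a_{\sigma(i)} = a_i$ for all $i$, that is, $\sigma a = a$, which is transversality.

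The main obstacle is precisely this transversality step, and its crux is recognizing that the finiteness of the order of $\sigma$ turns each coordinate orbit into a finite cycle; this is what allows the chain of inequalities to close up so that antisymmetry can collapse it. It is worth noting that nothing about $\mSet_0$ beyond its being a poset is used, and in particular no finiteness of $\mSet_0$ is needed: in contrast to Corollary~\ref{c:finiteSetCor}, here the relevant finiteness is supplied automatically by the permutation group $\sym_n$. With both hypotheses of Theorem~\ref{t:inducedOrder} established, that theorem completes the proof.
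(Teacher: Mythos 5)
Your proposal is correct, and both halves are in the right place: increasingness is the trivial one-line check (identical to the paper's), and the real content is transversality. But your transversality argument is genuinely different from the paper's. You use the cycle structure of $\sigma$: each coordinate orbit is finite, so the inequalities $a_{\sigma(j)} \opo a_j$ chain around a closed cycle, and transitivity plus antisymmetry collapse the cycle to equalities. The paper instead argues by induction on $n$: it takes $m$ to be the minimum value among the components of $a$, lets $I$ be the set of indices where $a_i = m$, observes that $\sigma a \opo a$ forces $\sigma$ to map $I$ into itself (so $\sigma a$ and $a$ agree on $I$), and then applies the inductive hypothesis to the restriction of $\sigma$ and $a$ to the complementary indices. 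Your route has two advantages: it needs no induction, and it sidesteps a subtle point in the paper's argument --- in a general poset a finite set of components need not possess a \emph{minimum} value, only \emph{minimal} ones, so the paper's proof as literally written needs the word ``minimum'' replaced by ``minimal'' (after which its index-set argument goes through). Your cycle argument uses only transitivity and antisymmetry of $\opo$ on $\mSet_0$ and works verbatim for an arbitrary poset, which is exactly the level of generality the corollary claims.
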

\begin{proof}
Any permutation acting on $\mSet$ is increasing with respect to the component-wise order.  To see that $\mGrp$ acts transversely, let $\sigma$ be any permutation of $n$ elements, and $a \in \mSet$, and assume $\sigma a \opo a$.  We proceed by induction on $n$.  If $n = 1$, then clearly $\sigma a = a$.  Now assume inductively that, for some $n \in \nats$, $\sigma a \opo a$ implies $\sigma a = a$ whenever then length of $a$ is no more than $n$.  Let $a$ have length $n+1$, and assume that $\sigma a \opo a$.  Let $m$ be the minimum value among the components of $a$, and let $I$ be the set of indices $i$ with $a_i = m$.  Then because $\sigma a \opo a$ in the component-wise order, $\sigma$ maps $I$ into itself.  Therefore $\sigma a$ and $a$ agree on the positions in $I$.  Now let $a^{\prime}$ be the restriction of $a$ to the indices not in $I$, and let $\sigma^{\prime}$ be the corresponding restriction of $\sigma$.  Then $\sigma^{\prime}$ is a permutation since $\sigma$ maps $I$ into itself, and we have $\sigma^{\prime} a^{\prime} \opo a^{\prime}$.  So by the inductive assumption $\sigma^{\prime} a^{\prime} = a^{\prime}$.  Thus $\sigma a$ and $a$ agree in all the positions not in $I$ as well.  Hence $\sigma a = a$, completing the induction.
\end{proof}
To see the relationship with submajorization, take $S_0 = \reals$ to be pitch space, with the natural ordering, and let $\mGrp = \sym_n$ be the full symmetric group on $n$ elements.  Then $\mMod = \reals^n \, / \, \sym_n$, which is the orbifold of all multisets of $n$ pitches, as in \cite{Hall}.  By Corollary \ref{c:componentWiseCor} component-wise order on $\reals^n$ induces a partial order $\ipo$ on $\reals^n \, / \, \sym$.  Submajorization is the extension $\subMajor$ of the partial order $\ipo$ defined by
\begin{equation*}
A \subMajor B \text{ if and only if } f(A) \ipo f(B)
\end{equation*}
where $f$ is the mapping from $\reals^n \, / \, \sym_n$ into itself defined by
\begin{equation*}
f(A) = \seq{a_{[1]},a_{[1]}+a_{[2]},a_{[1]}+a_{[2]}+a_{[3]}, \dots, a_{[1]}+a_{[2]}+ \dots + a_{[n]}}
\end{equation*}
and $a_{[i]}$ denotes the $i^{\text{th}}$-largest element of the multiset $A$.  The function $f$ is one-to-one, but not onto, from which it follows that submajorization is a proper extension of $\ipo$.

\section{Subset/Superset Order on Pitch Set Classes} \label{s:pcOrder}
We now apply the framework just developed to the pitch class sets of post-tonal theory.
The pitch classes under octave equivalence in the $12$-tone system are identified with $\ints_{12}$, called \emph{pitch class space}.  Subsets of $\ints_{12}$ correspond to \emph{pitch class sets}.  There are twelve distinct transpositions of pitch class space, namely $T_n x = x + n$ for $n, x \in \ints_{12}$, where the addition is modulo $12$.  These transpositions form a group $\mGrp$ that is itself isomorphic to $\ints_{12}$.  In line with Corollary \ref{c:finiteSetCor}, take $\mSet = 2^{\ints_{12}}$, the set of all subsets of pitch class space, and extend the action of $\mGrp$ to $\mSet$ naturally, as before.  The quotient space $\mMod = 2^{\ints_{12}} \, / \, \ints_{12}$ identifies pitch class sets that are transpositionally related.  
(So for instance all the diatonic collections are represented by a single equivalence class, all the octotonic collections are represented by another class, and so on.)
By Corollary \ref{c:finiteSetCor} the partial order of set inclusion on $2^{\ints_{12}}$ induces a partial order on $2^{\ints_{12}} \, / \, \ints_{12}$.
The elements of $2^{\ints_{12}} \, / \, \ints_{12}$ are precisely the \emph{set classes of $T_n$-type} from post-tonal theory, and the induced order is called the \emph{subset/superset ordering}. \cite[pp. 53, 96]{Straus}.  

A set class $A \in \ 2^{\ints_{12}} \, / \, \ints_{12}$ will be represented as $A = \set{0, a_1, a_2, \dots, a_{n-1}}$, where the pitch classes $a_i$ are listed in increasing order.
The interval from $a_k$ to $a_{k+1}$ will be called a \emph{scalar second}, the interval from $a_k$ to $a_{k+2}$ is a \emph{scalar third}, and so forth.  (Index arithmetic is modulo $n$ here.)

Denote the suborder of set classes whose scalar seconds span no more than $k$ semitones by $\ssc_k$.
First consider $\ssc_2$ as a suborder of $\ 2^{\ints_{12}}  \, / \, \ints_{12}$.  The elements of $\ssc_2$ are set classes whose step sizes are either $1$ or $2$ semitones.  These are precisely the classes that satisfy Tymoczko's ``diatonic seconds'' constraint in \cite{Tymoczko2004}, and, as he points out, the minimal elements are the classes in $\ssc_2$ that contain no consecutive semitones.  These are the whole tone collection, the diatonic collection, the octatonic collection, and the melodic minor (or acoustic) collection.
As $k$ increases, the minimal elements of $\ssc_k$ become more ``chord-like.''  This is explained by the following theorem, which characterizes the minimal set classes in the suborders $\ssc_k$ for $N$-tone equal temperament.

\begin{proposition} \label{t:minimalEltsSetClassOrder}
Consider the partial order induced by set inclusion on $2^{\ints_N} \, / \, \ints_N$, and let $\ssc_k$ be the suborder consisting of the set classes whose scalar seconds span no more than $k$ semitones.  Then a class $A \in \ssc_k$ is minimal in the suborder if and only if every scalar third in $A$ spans at least $k+1$ semitones.
\end{proposition}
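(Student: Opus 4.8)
The plan is to prove both directions by contraposition, using the single structural observation that deleting a pitch class from a set class merges two adjacent scalar seconds into their sum, and that this sum is precisely a scalar third. Set up notation by writing a representative $A = \set{0, a_1, \dots, a_{n-1}}$ with the $a_i$ increasing, and let the scalar seconds be the $n$ consecutive gaps $g_i = a_{i+1} - a_i$, indices modulo $n$, where the wrap-around gap is $g_{n-1} = N - a_{n-1}$. These gaps are transposition-invariant, so $\ssc_k$ is well-defined on set classes, and $A \in \ssc_k$ iff $g_i \le k$ for every $i$. A scalar third is exactly a sum $g_i + g_{i+1}$ of two consecutive seconds, namely the span from $a_i$ to $a_{i+2}$.

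First I would show that if some scalar third spans at most $k$, then $A$ is not minimal. Suppose $g_i + g_{i+1} \le k$. Form $B$ by deleting the single pitch class $a_{i+1}$. Every scalar second of $B$ coincides with a scalar second of $A$ except for the merged gap from $a_i$ to $a_{i+2}$, which equals $g_i + g_{i+1} \le k$; hence all of $B$'s seconds are $\le k$ and $B \in \ssc_k$. Since the representative $A \setminus \set{a_{i+1}}$ of $B$ is a proper subset of the representative $A$, we have $B \ipo A$ with $B \ne A$, so $A$ is not minimal.

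For the converse, I would show that if $A$ is not minimal, then some scalar third is $\le k$. Non-minimality gives $B \in \ssc_k$ with $B \ipo A$ and $B \ne A$; by Corollary \ref{c:finiteSetCor} the weak and strong relations agree, so I may take a representative of $B$ to be a proper subset of the representative $A$. Because $B$ omits at least one pitch class of $A$, at least one gap of $B$ must span two or more consecutive $A$-gaps. That $B$-gap is therefore at least as large as the sum $g_i + g_{i+1}$ of the first two consecutive $A$-gaps it covers, which is a genuine scalar third of $A$. Since $B \in \ssc_k$, that $B$-gap is $\le k$, forcing the scalar third to be $\le k$ as well.

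Combining the two implications yields the stated equivalence. The step demanding the most care is the converse: I must rule out \emph{every} proper subset in $\ssc_k$, not merely single-note deletions. This is handled by the remark that any omitted note forces some $B$-gap to swallow at least two adjacent $A$-gaps, and the sum of the first two of these is already a scalar third, so it suffices to compare that single $B$-gap against $k$. The wrap-around gap needs no separate treatment, since all index arithmetic is modulo $n$ and the merging argument is purely local.
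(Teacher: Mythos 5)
Your proposal is correct, and it rests on the same mechanism as the paper's proof: removing a pitch class merges two adjacent scalar seconds into the corresponding scalar third. The genuine difference is in the minimality (converse) direction. The paper argues directly that if every scalar third of $A$ spans at least $k+1$ semitones, then deleting the middle note of any third yields a class outside $\ssc_k$, and concludes minimality from that alone; as written, this only excludes classes obtained from $A$ by a \emph{single} deletion, whereas minimality requires excluding every $B \in \ssc_k$ lying strictly below $A$, whose representative may omit several notes of $A$. Your contrapositive version supplies exactly the missing step: any omitted note forces some gap of $B$ to cover at least two consecutive gaps of $A$, hence to dominate an actual scalar third of $A$, and $B \in \ssc_k$ then forces that third to span at most $k$ semitones. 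In effect you make explicit the monotonicity fact (further deletions can only enlarge gaps) that the paper's ``hence'' leaves implicit, so your argument is, if anything, more complete than the paper's own. Both treatments share the same blind spot regarding degenerate cases (classes with fewer than three pitch classes, where ``scalar third'' requires a convention), but that is an issue with the statement's conventions, not with your proof.
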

\begin{proof}
If $A \in \ssc_k$ has a scalar third spanning $k$ or fewer semitones, then eliminate the middle pitch class in that third to produce a new set class $B$.  This set class is still in $\ssc_k$, and $B \sse A$ in the induced order.  Hence $A$ is not minimal.  On the other hand, if every scalar third in $A$ spans at least $k+1$ semitones, then eliminating the middle pitch class in any third results in a class $B$ that is not in $\ssc_k$.  Hence $A$ is minimal in the suborder on $\ssc_k$.  
\end{proof}

Table 1 lists the minimal elements in each $\ssc_k$, $2 \le k \le 5$, for $12$-tone equal temperament.  Almost every set class in this table has played a prominent role in either tonal, non-tonal, or jazz music.

\begin{table}
\begin{center}
\caption{Minimal Elements of $\ssc_k$ for $12$-Tone Equal Temperament. }
{
\begin{tabular}{|c|l|l|}
\hline
Suborder & Minimal Set Classes & Comment \\ \hline
\multirow{4}{*}{$\ssc_2$} 
 & $\set{0,1,3,4,6,7,9,10}$ & octatonic scale\\
 & $\set{0,2,3,5,7,9,11}$ & melodic minor scale\\
 & $\set{0,2,4,5,7,9,11}$ & diatonic scale\\
 & $\set{0,2,4,6,8,10}$ & whole tone scale\\ \hline
\multirow{7}{*}{$\ssc_3$} 
 & $\set{0,1,4,5,8,9}$ & symmetric scale\\
 & $\set{0,2,4,6,8,10}$ & whole tone scale\\ 
 & $\set{0,2,4,7,9,11}$ & pentatonic scale\\
 & $\set{0,2,4,7,10}$ & dominant ninth chord\\ 
 & $\set{0,3,4,7,9}$ & a blues scale\\ 
 & $\set{0,3,4,7,10}$ & e.g. C$7\sharp 9$\\ 
 & $\set{0,3,6,9}$ & diminished chord\\ \hline 
\multirow{7}{*}{$\ssc_4$} 
 & $\set{0,3,6,9}$ & diminished chord\\ 
 & $\set{0,3,6,10}$ & half-diminished chord\\
 & $\set{0,3,7,10}$ & minor seventh chord\\
 & $\set{0,4,6,10}$ & e.g. C$7\flat 5$\\ 
 & $\set{0,4,7,10}$ & dominant seventh chord\\
 & $\set{0,4,7,11}$ & major seventh chord\\
 & $\set{0,4,8}$ & augmented triad \\ \hline
\multirow{7}{*}{$\ssc_5$} 
 & $\set{0,3,6,9}$ & diminished chord\\ 
 & $\set{0,3,7}$ & minor triad \\
 & $\set{0,4,6,10}$ & e.g. C$7\flat 5$\\ 
 & $\set{0,4,7}$ & major triad \\
 & $\set{0,4,8}$ & augmented triad \\
 & $\set{0,5,6,11}$ & symmetric chord\\
 & $\set{0,5,10}$ & quartal triad \\ \hline
\end{tabular}
}
\end{center}
\label{tb:maxSetClasses}
\end{table}

Clearly this approach has further generalizations.
The set of all possible pitch classes under octave equivalence is modeled by the circle $S^1$.  In line with Corollary \ref{c:finiteSetCor}, take $\mSet_0 = S^1$ and $\mSet$ to be the collection of all finite subsets of $\mSet_0$, ordered by set inclusion.  
Elements of $\mSet$ are \emph{generalized pitch class sets}.  
If $\mGrp$ is any group of transformations acting on $\mSet_0$, then $\mGrp$ extends naturally to a group acting on $\mSet$, as in Corollary \ref{c:finiteSetCor}, and set inclusion induces a partial order on $\mMod$.  The elements of $\mMod$ are \emph{generalized set classes}, and set class $A$ precedes set class $B$ in the induced order if and only if there exists pitch class sets $a \in A$ and $b \in B$ with $a \subseteq b$.
If $\mGrp$ is the group of all transpositions of pitch class space (geometrically, rotations of the circle), then this partial order contains $2^{\ints_N} \, / \, \ints_N$ as a suborder, for all $N \in \nats$.  Of course one may consider other groups $\mGrp$, and thereby obtain other partial orders.

\section{A Timbral Partial Order} \label{s:timbreOrder}
\emph{Steady state timbre} refers to those aspects of timbre apart from transient effects such as attack or release.  For instance, if one focuses on the sound of a sustained note from a trombone, one is attending to the steady-state timbre of the instrument.  Timbral qualities like ``brightness,'' ``warmth,'' and so on, often refer to the steady-state aspects of timbre.  
Following Lewin \cite[pp. 82-85]{Lewin} and many others, we adopt a simple \emph{discrete power spectrum} model for steady-state timbre.  Essentially, we assume that the steady-state timbre of a musical instrument is characterized by the amount of power in the fundamental frequency being played, and in each of its harmonics.  Of course in reality this is far from the complete picture.  
For instance, the power spectrum may change depending on the register in which the instrument is being played, and there may also be non-harmonic components in the power spectrum.  Nonetheless, the discrete power spectrum remains a useful first-order approximation to steady-state timbre.

Given a steady-state timbre, let $a_k \ge 0$ denote the proportion of total signal power in the $\ith{k}$ harmonic of the fundamental, $k = 1, 2, \cdots$.  
(Equivalently, since timbre is independent of loudness, we normalize the total power to 1.)
Assuming a maximum number $n$ of harmonics, the set of all steady-state timbres is identified with
\begin{equation*}
\mSet = \set{a \in \reals^n : a_k \ge 0, \, \sum a_k = 1},
\end{equation*}
the set of all probability vectors of length $n$.  We will refer to vectors $a \in \mSet$ as \emph{timbral vectors}.

The connection between timbre and discrete probability is a running theme in our model.  It has a musical interpretation in terms of \emph{granular synthesis} \cite[chapter 3]{Roads2002}.  Let $g_k$ be a short pulse or \emph{grain} of sound, whose frequency is the $\ith{k}$ harmonic of the fundamental.  Given a timbral vector $a$, build a signal $s$ by selecting grain $g_k$ with probability $a_k$, and repeating, delaying successive grains by a small amount.  Then the power spectrum of the signal $s$ will be approximated by the timbral vector $a$ (modulo envelope effects).  

An aspect of timbre that is often mentioned in connection with orchestral instruments is ``brightness.''  Brightness is associated with the presence of significant power in the higher harmonics of the power spectrum.  
Acousticians tend to associate brightness with some measure of center of the power spectrum; see \cite{Marozeau2003, Marozeau2006}, for instance.  
However, if brightness is measured by any scalar quantity, there is the immediate consequence that \emph{any} two timbres are comparable in terms of brightness.  
But this seems at odds with intuition.  
Must it necessarily be the case that either a clarinet is brighter than a dulcimer or \textit{vice versa}, for instance?  
We would prefer a model which leaves open the possibility the certain timbres are simply incomparable in terms of brightness.

We now define a partial order on timbres that encodes an idea of brightness.
Say that a timbre $b \in \mSet$ is brighter than another timbre $a$ if for each $k = 1, 2, \cdots, n$ we have
\begin{equation} \label{e:brightPO}
\sum_{i = k}^{n} a_i \le \sum_{i = k}^n b_i.
\end{equation}
If $b$ is brighter than $a$, we write $a \opo b$.
In this order, $b$ is brighter than $a$ precisely when $b$ has more power in the highest $j$ harmonics than does $a$, {for each $j = 1, 2, \cdots , n$}.  
The probabilistic interpretation of the brighter-than order is this: let $K_a \in \set{1 , 2, \dots, n}$ be a random integer (harmonic) chosen according to the distribution (timbre) $a \in \mSet$, and let $K_b$ be a similar random sample from $b$.  Then $b$ is brighter than $a$ precisely when
$$\pr{K_a > \alpha} \le \pr{K_b > \alpha}$$  
for all $\alpha$.  In other words, $K_a$ precedes $K_b$ in the partial order of {stochastic dominance}: one is always more likely to sample a high harmonic from $K_b$'s distribution than from $K_a$'s.

Figure \ref{fig:spectra20-po} shows the relationships between six instruments in the brighter-than order.  
The spectra for these instruments were extracted from recordings made by Lawrence Fritts at the University of Iowa Electronic Studios \cite{Fritts}.
A directed arrow from one instrument to another means that the second instrument is brighter than the first.  
The flute, oboe, and trumpet are the maximal elements in the suborder, and the alto saxophone, clarinet, and horn are minimal elements.  
Note that the flute and oboe both dominate all the minimal elements, but the trumpet only dominates the horn.  This is due to the rapid decay of the highest harmonics in the trumpet spectrum.
\begin{figure}
\begin{center}
\includegraphics[scale=.47]{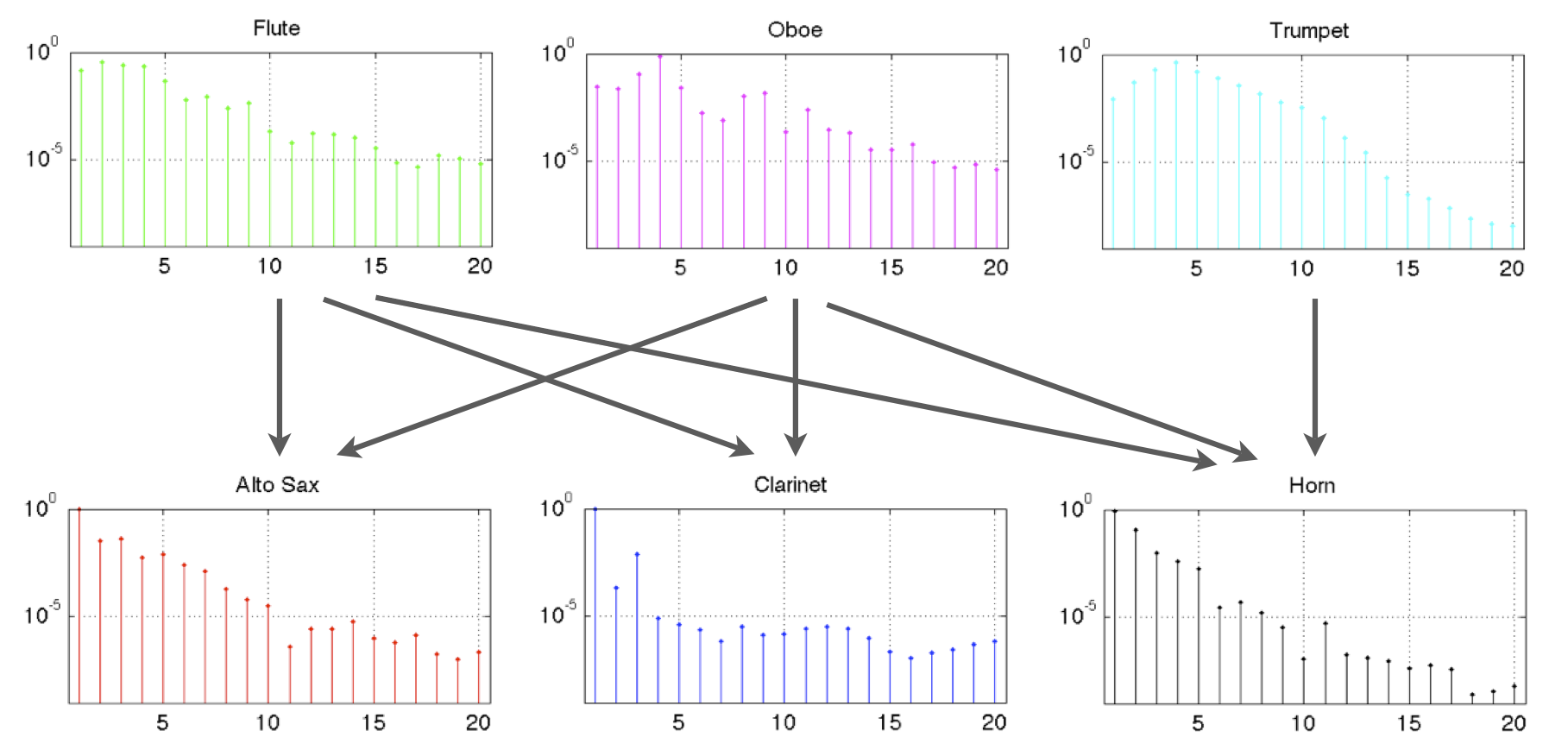} 
\end{center}
\caption{Six instruments in the the brighter-than order.}
\label{fig:spectra20-po}
\end{figure}

Obviously one can question how well the brightness partial order defined by \eqref{e:brightPO} models actual listeners' judgements about timbral brightness.  
Such judgements depend on a variety of factors, and not all listeners will come to the same conclusion.
It is certainly possible to tweak the definition \eqref{e:brightPO} in various ways to better approximate the judgements most listeners would make on a set of pre-defined test cases.
Indeed, this would be an interesting investigation to undertake.
The larger point, however, is that by modeling brightness as an ordering, rather than as a scalar quantity, we have more ``degrees of freedom'' at our disposal to make the model realistic.

Brightness is not the only timbral quality that can be expressed as a partial order.  
In general, if $H$ is any $n$-by-$n$ nonnegative, nonsingular matrix, then a partial order on timbral vectors is defined by $a \opo b$ if and only if $Ha \le Hb$ component-wise.  
The brighter-than partial order is of this type, with
\begin{equation} \label{e:brightH}
H_{ij} = 
\begin{cases}
1 \quad &\text{if $n-i+1 \le j \le n$} \\
0 \quad &\text{otherwise}
\end{cases}
\end{equation}

As an application of these ideas, consider a sound design problem of the following type:
\textit{among all instruments that are no brighter than a trumpet, which has the timbre that is closest to an oboe?}
The answer (or answers) to this problem depend on how one defines ``closest to,'' that is, on the choice of a metric for the space $\mSet$ of all timbral vectors.  
One musically meaningful measure of distance between timbres is defined by taking the maximum power discrepancy between the two timbres over all possible subsets of harmonics.
In probabilistic terms, this is the \emph{total variational distance} between probability distributions \cite[p. 128]{Grimmett}, denoted by $\dtv$.  
\begin{equation*}
\dtv(x,y) = \max \set{\sum_{i \in I} |x_i - y_i | : \, I \subseteq \set{1, 2, \cdots , n}}
\end{equation*}
Two timbres are close in this metric if they have approximately the same power in every subset of harmonics; they are far apart if there is a large power discrepancy in some subset of harmonics.  From a signal processing point of view, if two timbres are close in this metric, than applying any linear time-invariant filter to the signals will result in outputs that are approximately the same.  
It is well known that total variational distance is related to the $\ell^1$ norm by
\begin{equation*}
\dtv(x,y) = \frac{1}{2} \| x-y \|_1 
\end{equation*}
So with this choice of metric, our sound design problem becomes a constrained $\ell_1$ minimization problem,
\begin{align} \label{e:L1Min}
&\text{Minimize:   $\| x - p \|_1$}  \nonumber \\
&\text{Subject to:  $Hx \le Hb$ component-wise},
\end{align}
where $H$ is the matrix defined in \eqref{e:brightH}.  This problem can be solved efficiently by re-casting it as a linear programming problem; in general solutions are not unique.  Figure \ref{fig:TOSol} shows a solution calculated with the \textsc{Matlab}$^{\textregistered}$ \texttt{linprog} function, with $b$ equal to the timbral vector for the trumpet, and $p$ equal to the timbral vector for the oboe.  Note that the power spectrum of the solution follows that of the trumpet in the higher harmonics, in order satisfy the brightness constraint, while it follows the spectrum of the oboe in the prominent lower harmonics, as it must in order to minimize the distance to the oboe.

\begin{figure} 
\begin{center}
\includegraphics[scale=0.5]{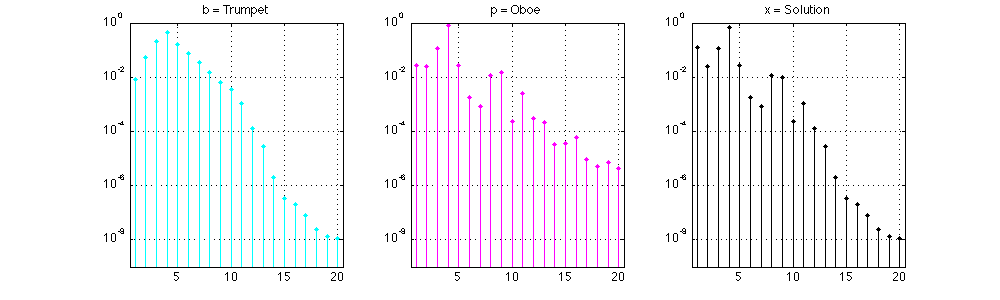} 
\end{center}
\caption{A solution to problem \eqref{e:L1Min}, with $b =$ trumpet and $p =$ oboe.  Solution calculated with the \textsc{Matlab}$^{\textregistered}$ \texttt{linprog} function, using the interior-point algorithm.}
\label{fig:TOSol}
\end{figure}

Of course one would like to refine problem \eqref{e:L1Min} to achieve a unique solution, preferably one that is easily expressible in terms of $b$ and $p$, the data for the problem.  This does not appear to be easy to do.  Using properties of total variational distance, one can show that any solution $x$ of \eqref{e:L1Min}  must satisfy $x \opo p$.  It is well-known that the set of probability vectors ordered by stochastic dominance is in fact a lattice \cite[Theorem 3.1]{Muller}, where the infimum of probability vectors $x$ and $y$ is the probability vector $z = x \meet y$ determined uniquely by $Hz = \min(Hx, Hy)$ component-wise, and $H$ is the matrix defined by \eqref{e:brightH}.
Hence we may conclude that any solution $x$ must satisfy $x \opo b \meet p$, where $b\meet p$ is the infimum of $b$ and $p$.  For probability vectors of length $n = 3$ one can prove that in fact $x = b \meet p$ is the unique solution of \eqref{e:L1Min} that is closest to $b$ in total variational distance.  But for $n \ge 4$ this is no longer true; in fact, in these cases $b \meet p$ need not be a solution of \eqref{e:L1Min} at all!
Another variant of problem \eqref{e:L1Min} that in some sense simultaneously minimizes the distance from $x$ to $p$ and the distance from $x$ to $b$ is
\begin{align} \label{e:L1Min2}
&\text{Minimize:   $\| x - p \|_1 + \| x - b\|_1$} \nonumber \\
&\text{Subject to:  $Hx \le Hb$ component-wise}.
\end{align}
In this case one can prove that $x = b \meet p$ is always a solution, but again, other solutions exist as well when $n \ge 4$.  Whether any of these myriad solutions have significant \emph{perceptible} differences in timbre is an interesting question.

\end{document}